\documentclass[12pt]{article}
\usepackage{amssymb,amsthm}
\usepackage{amsmath}
\usepackage{amscd}
\usepackage{youngtab}
\usepackage{tikz}
\usepackage{colortbl}
\usepackage{tikz-cd}
\setlength{\parindent}{0cm}
\usepackage[all]{xy}
\setlength{\topmargin}{-0.2in} \setlength{\oddsidemargin}{0.2in}
\setlength{\textwidth}{6.25in}
\setlength{\textheight}{8.5in}
\setlength{\unitlength}{0.6pt}
\newtheorem{thm}{Theorem}
\newtheorem{lema}[thm]{Lemma}
\newtheorem{cor}[thm]{Corollary}
\newtheorem{prop}[thm]{Proposition} 

\newtheorem{defi}[thm]{Definition}
\newtheorem{exa}[thm]{Example}

\definecolor{amber}{rgb}{1.0, 0.75, 0.0} 
\definecolor{americanrose}{rgb}{1.0, 0.01, 0.24}
\definecolor{amber(sae/ece)}{rgb}{1.0, 0.49, 0.0}

\begin{document}
\setlength{\baselineskip}{16pt}
\title{On multi-symmetric functions and transportation polytopes}
\author{Eddy Pariguan and Jhoan Sierra V}
\maketitle
\begin{abstract}
We present a study of the transportation polytopes appearing in the product rule of elementary multi-symmetric functions introduced by F. Vaccarino. \end{abstract}

\section*{Introduction}
The classical transportation problems in operation research arise from the problem of transporting goods from a set of factories, and a set of consumer centers. Assuming the total supply of the set of factories equals to the total demand of consumer centers, we can optimize the cost of transporting goods (see \cite{Hitch, Hant, Koop}). Transportation polytopes have an interest in discrete mathematics and also arise naturally in optimization and statistics (see \cite{Hoff, Motz, Neu, Yem}). 

 A transportation polytope consists of all tables of non-negative real numbers that satisfy certain equations. In this work  we only consider the well-known subfamily, the classical transportation polytopes in just two indices, the $2$-way transportation polytopes and we use the notation and terminology introduced by Jesus A. De Loera and Edward D. Kim in \cite{Loe}.

Our main motivation comes from the study of the product rule of elementary multi-symmetric functions introduced by F. Vaccarino in \cite{Vac}  and their relationships with transportation polytopes.  The classic product rule of multi-symmetric functions  and its respective generalization to the quantum case introduced by Diaz and Pariguan in \cite{DP}, both have an unexplored underlying structure of transportation polytopes. The main goal of this work, see section \ref{LS},  is to present a first combinatorial description of this structure in the classical case.

\section{Review of multi-symmetric  functions}

In this section, we present a short introduction to elementary multi-symmetric functions. Fix a characteristic zero field $\mathbb{K}$. Consider the action of the symmetric group $S_n$ on ${\mathbb{K}}^n$ by permutation of vector entries. The quotient space ${\mathbb{K}}^n /S_n$ is the configuration space of $n$-unlabeled points with repetitions in $\mathbb{K}$. Polynomials functions on ${\mathbb{K}}^n /S_n$ may be identified with the algebra $\mathbb{K}[x_1,\cdots,x_n]^{S_n}$ of $S_n$ invariant polynomials in $\mathbb{K}[x_1,\cdots,x_n]$. It is well-known that ${\mathbb{K}}^n /S_n$ is an $n$-dimensional affine space; indeed we have an isomorphism of algebras
$$\mathbb{K}[x_1,\cdots,x_n]^{S_n} \equiv \mathbb{K}[e_1,\cdots,e_n], $$
where $\alpha\in[n]=\{1,2,\cdots,n\}$ and $e_{\alpha}$ is the elementary symmetric polynomial determined by the identity
$$\prod_{i=1}^n(1+x_it)=\sum_{\alpha=0}^n e_{\alpha}(x_1,\cdots, x_n)t^{\alpha}.$$

If we consider polynomial functions over $(\mathbb{K}^d)^n/S_n$,
we obtain the ring of multi-symmetric functions, also called the ring of vector symmetric functions or MacMahon's symmetric functions \cite{MacD}, which are given by
$$\mathbb{K}[x_{11},\cdots,x_{1d},x_{22},\cdots,x_{2d},\cdots,x_{n1},\cdots,x_{nd}]^{S_n}.$$

We will denote by $\mathbb{K}[(\mathbb{K}^d)^n]^{S_n}$ to the ring $\mathbb{K}[x_{11},\cdots,x_{nd}]^{S_n}$. The following results due to F. Vaccarino.

Fix $p,n,d\in\mathbb{N}^+$. Let $y_1,\cdots,y_d$ and $t_1,\cdots,t_d$ be independent and commutative variables in $\mathbb{K}$. For $\alpha=(\alpha_1,\cdots,\alpha_p)\in\mathbb{N}^p$ we use the following notation
$$|\alpha|=\displaystyle\sum_{i=1}^{p}\alpha_i,\hspace{1em}t^{\alpha}=\prod_{i=1}^{p}t_i^{\alpha_i}.$$

Given a polynomial $f\in\mathbb{K}[y_1,\cdots,y_d]$ and $i\in[n]$, we denote by $f(i)=f(x_{i1},\cdots,x_{id})$ to the polynomial obtained by replacing each appearance of $y_j$ in $f$ by  $x_{ij}$, for $j\in[d]$.

\begin{defi}
	Fix $\alpha\in\mathbb{N}^p$ such that $|\alpha|\leq n$ and $f=(f_1,\cdots,f_p)\in\mathbb{K}[y_1,\cdots,y_d]^p$. The multisymmetric functions $e_\alpha(f)\in\mathbb{K}[(\mathbb{K}^d)^n]^{S_n}$, are given by the identity
	$$\prod_{i=1}^{n}(1+f_1(i)t_1+f_2(i)t_2+\cdots+f_p(i)t_p)=\displaystyle\sum_{|\alpha|\leq n}e_\alpha(f)t^\alpha.$$
\end{defi}

The following result provide an explicit formula for the product rule of multi-symmetric functions
	\begin{thm}\label{teoimportante}
		Fix $p,q,n\in\mathbb{N}^+$, $f\in\mathbb{K}[y_1,\cdots,y_d]^p$ and $g\in\mathbb{K}[y_1,\cdots,y_d]^q$. Let $\alpha\in\mathbb{N}^p$ and $\beta\in\mathbb{N}^q$ be such that $|\alpha|,|\beta|\leq n$, then we have
		$$e_\alpha(f) e_\beta(g)=\displaystyle\sum_{\gamma\in L(\alpha,\beta,n)}e_\gamma(f,g,fg),$$
		where:
		\begin{enumerate}
			\item $(f,g,fg)=(f_1,\cdots,f_p,g_1,\cdots,g_q,f_1g_1,\cdots,f_1g_q,f_2g_1,\cdots,f_2g_q,\cdots,f_pg_1,\cdots,f_pg_q)$.
			\item $L(\alpha,\beta,n)$ is the set of matrices $\gamma\in{\rm{Map}}(\{0\}\cup[p]\times\{0\}\cup[q], \mathbb{N})$ such that
			\begin{itemize}
				\item $\gamma_{00}=0$,
				\item $|\gamma|=\displaystyle\sum_{i=0}^{p}\displaystyle\sum_{j=0}^{q}\gamma_{ij}\leq n$,
				\item $\displaystyle\sum_{j=1}^{q}\gamma_{ij}=\alpha_i$ for $i\in [p]$. 
				\item $\displaystyle\sum_{i=1}^{p}\gamma_{ij}=\beta_j$ for $j\in [q]$.
			\end{itemize}
		\end{enumerate}
	\end{thm}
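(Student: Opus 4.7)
The plan is to prove this identity by a direct generating-function computation. Starting from the defining identities, I would use two disjoint sets of formal variables $t_1,\ldots,t_p$ (for $f$) and $s_1,\ldots,s_q$ (for $g$), and write
$$\left(\sum_\alpha e_\alpha(f)\, t^\alpha\right)\left(\sum_\beta e_\beta(g)\, s^\beta\right)=\prod_{r=1}^{n}\!\left(1+\sum_{i=1}^p f_i(r)t_i\right)\cdot\prod_{r=1}^{n}\!\left(1+\sum_{j=1}^q g_j(r)s_j\right).$$
Since both products are over the same index set $r\in[n]$, they can be merged into a single product $\prod_{r=1}^n(1+A_r)(1+B_r)$ with $A_r=\sum_i f_i(r)t_i$ and $B_r=\sum_j g_j(r)s_j$. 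The expansion $(1+A_r)(1+B_r)=1+A_r+B_r+A_rB_r$ produces, besides the linear terms, the bilinear cross terms $f_i(r)g_j(r)(t_is_j)$.

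The next step is to recognize this merged product as a specialization of the generating function for $e_\gamma(f,g,fg)$. Labelling the $p+q+pq$ components of $(f,g,fg)$ by the pairs $(i,j)\in(\{0\}\cup[p])\times(\{0\}\cup[q])\setminus\{(0,0)\}$---so that $h_{i0}=f_i$, $h_{0j}=g_j$, $h_{ij}=f_ig_j$---and introducing corresponding auxiliary variables $u_{ij}$, the substitution $u_{i0}\mapsto t_i$, $u_{0j}\mapsto s_j$, $u_{ij}\mapsto t_is_j$ converts
$$\prod_{r=1}^n\!\left(1+\sum_{(i,j)\ne(0,0)}h_{ij}(r)\,u_{ij}\right)=\sum_{|\gamma|\leq n}e_\gamma(f,g,fg)\,u^\gamma$$
precisely into the expanded product above. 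Under this substitution the monomial $u^\gamma=\prod_{(i,j)\ne(0,0)}u_{ij}^{\gamma_{ij}}$ becomes $t^{a(\gamma)}s^{b(\gamma)}$, where $a_i(\gamma)$ and $b_j(\gamma)$ are the row-$i$ and column-$j$ sums of $\gamma$, collecting every entry whose substituted monomial carries a factor of $t_i$ or $s_j$.

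Finally I would extract the coefficient of $t^\alpha s^\beta$ on both sides. On one side it is $e_\alpha(f)\,e_\beta(g)$; on the other it is a sum of $e_\gamma(f,g,fg)$ over those $\gamma$ whose row sums equal $\alpha$ and whose column sums equal $\beta$---exactly the constraints defining $L(\alpha,\beta,n)$, with the bound $|\gamma|\leq n$ inherited from the fact that $e_\gamma$ vanishes for $|\gamma|>n$. I expect the main obstacle to be nothing deeper than careful bookkeeping of exponents under the substitution $u_{ij}\mapsto t_is_j$; the transportation-polytope structure of $L(\alpha,\beta,n)$ emerges automatically from the bilinear expansion $(1+A_r)(1+B_r)$ applied termwise inside each factor.
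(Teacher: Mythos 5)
The paper does not actually prove this theorem: it is stated as a result of Vaccarino and cited from \cite{Vac} without argument, so there is no in-paper proof to compare against. Your generating-function argument is correct and complete as a proof: merging the two products over the common index set $[n]$, expanding $(1+A_r)(1+B_r)$, recognizing the result as the specialization $u_{i0}\mapsto t_i$, $u_{0j}\mapsto s_j$, $u_{ij}\mapsto t_is_j$ of the generating function for $e_\gamma(f,g,fg)$, and comparing coefficients of $t^\alpha s^\beta$ (legitimate since both sides are polynomials in the formal variables $t,s$ with coefficients in $\mathbb{K}[(\mathbb{K}^d)^n]^{S_n}$) yields exactly the claimed sum. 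One point worth making explicit: your substitution produces the constraints $\gamma_{i0}+\sum_{j=1}^{q}\gamma_{ij}=\alpha_i$ and $\gamma_{0j}+\sum_{i=1}^{p}\gamma_{ij}=\beta_j$, i.e.\ \emph{full} row and column sums including the zeroth entries, whereas the theorem as printed starts both sums at $1$. The printed index range is a typo --- Example \ref{ejemlucas2} and Example \ref{ejem1} both impose $\gamma_{10}+\gamma_{11}+\gamma_{12}=\alpha_1$, etc., and the proof of Theorem \ref{teo1} uses compositions of $\alpha_i-\gamma_{i0}$ --- so your derivation proves the intended statement; just state the corrected constraints rather than asserting they are ``exactly'' the ones displayed.
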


Graphically, a matrix $\gamma$ is represented as
$$
\begin{array}{cc}
\begin{array}{ c c c c c c c c}
\phantom{abc} & \beta_{1\phantom{a}}  & \beta_{2\phantom{a}} & \beta_{3\phantom{a}} & \mbox{} & \beta_{q\phantom{a}}  \\
\mbox{} & \uparrow & \uparrow& \uparrow& \cdots  & \uparrow\\
\end{array} &  \begin{array}{cc}
\mbox{}  & \mbox{} \\
\mbox{}  & \mbox{} \\
\end{array}\\
\begin{bmatrix}
0          &\gamma_{01}&\gamma_{02}&\gamma_{03}&\cdots&\gamma_{0q}\\
\gamma_{10}&\gamma_{11}&\gamma_{12}&\gamma_{13}&\cdots&\gamma_{1q}\\
\gamma_{20}&\gamma_{21}&\gamma_{22}&\gamma_{23}&\cdots&\gamma_{2q}\\
\vdots & \mbox{} & \mbox{} & \mbox{} & \mbox{} & \vdots\\
\gamma_{p0}&\gamma_{p1}&\gamma_{p2}&\gamma_{p3}&\cdots&\gamma_{pq}
\end{bmatrix} & \begin{array}{cc}
\mbox{} & \mbox{}\\
\to & \alpha_1\\
\to & \alpha_2\\
\mbox{} & \vdots\\
\to & \alpha_p
\end{array} 
\end{array}$$

where the arrows $\to\uparrow$ represent, respectively, row and column sums and the matrix $\gamma$ will be identify with the vector
$$\vec{\gamma}=(\gamma_{10},\cdots,\gamma_{p0},\gamma_{01},\cdots,\gamma_{0q},\gamma_{11},\cdots,\gamma_{1q},\gamma_{21},\cdots,\gamma_{2q},\cdots,\gamma_{p1},\cdots,\gamma_{pq}).$$

The main goal of this work is the study of the combinatorial structure underlying in the set of matrices $L(\alpha,\beta,n)$ introduced in Theorem \ref{teoimportante}.

\begin{exa}\label{ejemlucas2}
	For $n=3,\alpha=(2,1),\beta=(1,2),f=(y_1,y_2)$ and $g=(y_1y_3,y_2)$, we have the following identity
	
	$$e_{(2,1)}(y_1,y_2)e_{(1,2)}(y_1y_2,y_3)=\displaystyle\sum_{\gamma}e_\gamma(y_1,y_2,y_1y_2y_3,y_1^2y_3,y_1y_2y_3,y_1y_2,y_2^2)$$
	where $\gamma=(\gamma_{10},\gamma_{20},\gamma_{01},\gamma_{02},\gamma_{11},\gamma_{12},\gamma_{21},\gamma_{22})\in\mathbb{N}^8$ is such that $|\gamma|\leq3$ and:\\
	
	\begin{minipage}{.5\linewidth}
		\begin{center}
			$\gamma_{10}+\gamma_{11}+\gamma_{12}=2,$\\
			$\gamma_{20}+\gamma_{21}+\gamma_{22}=1,$
		\end{center}
	\end{minipage}
	\begin{minipage}{.5\linewidth}
		\begin{center}
			$\gamma_{01}+\gamma_{11}+\gamma_{21}=1,$\\
			$\gamma_{20}+\gamma_{12}+\gamma_{22}=2.$
		\end{center}
	\end{minipage}\\
	
Finding the solutions we obtain the vectors
$$(0,0,0,0,1,1,0,1),(0,0,0,0,0,2,1,0)$$
then we have that
$$\begin{array}{lcl}
e_{(2,1)}(y_1,y_2)e_{(1,2)}(y_1y_3,y_2) & = & e_{(1,1,1)}(y_1^2y_3,y_1y_2y_3,y_2^2)\\
\mbox{} & + & e_{(2,1)}(y_1^2y_3,y_1y_2y_3,y_2^2).
\end{array}$$
\end{exa}

\section{Classical transportation polytopes }
In this section, we review a few needed notions on classical $2$-way transportation polytopes and we assume the
reader to be somewhat familiar with De Loera and Kim's work \cite{Loe}. 

\begin{defi}\label{politransporte}
	Fix $p,q\in\mathbb{N}$ and let $u\in\mathbb{R}^p_{\geq0}$,  $v\in\mathbb{R}^q_{\geq0}$  be two vectors. The transportation polytope $P$ of size $p\times q$ defined by the vectors $u$ and $v$ is the convex polytope on $p\times q$ variables $x_{ij}\in\mathbb{R}_{\geq0}$, where $i\in[p]$ and $j\in[q]$, which satisfy the $p+q$ equations given by:
	\begin{equation}\label{polycon}
	\sum_{j=1}^{q}x_{ij}=u_i\text{ and } \displaystyle\sum_{i=1}^{p}x_{ij}=v_j. 
	\end{equation}
The vectors $u$ and $v$ are called marginals vectors or margins vectors of the polytope $P$.
\end{defi}
These polytopes are called transportation polytopes because they model the transportation of goods from $p$ supply locations to $q$ demand locations. 

\begin{exa}\label{ejpolyt}
Let us consider the transportation of goods for $3$-supply locations to  $3$-demand location with suppliyng vector  $u=(5,4,3)$ and 
demanding vector $v=(6,2,4)$.  A point $x$ in the transportation polytope $P$ of size $3\times 3$ defined by the margins $u$ and $v$ is given by
$$\begin{array}{cccc}
\mbox{} & \mbox{} & \mbox{} & \mbox{} \\
\mbox{} & \mbox{} & \mbox{} & \mbox{} \\
x&=&
\begin{bmatrix}
x_{11}&x_{12}&x_{13}\\
x_{21}&x_{22}&x_{23}\\
x_{31}&x_{32}&x_{33}\\
\end{bmatrix}
&=
\end{array}
\begin{array}{cc}
\begin{array}{ c c c c c}
5 & 4 & 3 \\
\uparrow & \uparrow & \uparrow\\
\end{array} &  \begin{array}{cc}
\mbox{}  & \mbox{} \\
\mbox{}  & \mbox{} \\
\end{array}\\
\begin{bmatrix}
4 & 1 & 1\\
0 & 2 & 0\\
1 & 1 & 2\\ 
\end{bmatrix} & \begin{array}{cc}
\to & 6\\
\to & 2\\
\to & 4\\
\end{array} 
\end{array}$$
where the horizontal and vertical arrows represent, respectively, row and column sums.

\end{exa}

\begin{lema}\label{lemma5}
	Let $P$ be a $2$-way transportation polytope of size $p\times q$ defined by the margins $u\in\mathbb{R}^p_{\geq0}$ and $v\in\mathbb{R}^q_{\geq0}$. 
	The polytope $P$ is not empty is and only if
	$$\displaystyle\sum_{i\in[p]}u_i=\displaystyle\sum_{j\in[q]}v_j.$$
\end{lema}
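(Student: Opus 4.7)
The plan is to prove the two directions separately; the forward implication is essentially a double-counting identity, while the reverse implication requires exhibiting an explicit feasible point.

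For necessity, I would pick any $x \in P$ and compute the total mass of $x$ in two different ways using the defining equations \eqref{polycon}. Summing the row-sum equations $\sum_{j=1}^{q} x_{ij} = u_i$ over $i \in [p]$ gives $\sum_i u_i = \sum_{i,j} x_{ij}$, while summing the column-sum equations $\sum_{i=1}^{p} x_{ij} = v_j$ over $j \in [q]$ gives $\sum_j v_j = \sum_{i,j} x_{ij}$. The two right-hand sides are equal by Fubini for finite sums, so $\sum_i u_i = \sum_j v_j$.

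For sufficiency, I would assume the common total $S := \sum_i u_i = \sum_j v_j$ and construct an explicit $x \in P$. The cleanest choice is the rank-one (independence) table
\[
x_{ij} = \frac{u_i v_j}{S} \quad \text{for } i \in [p],\ j \in [q],
\]
which clearly satisfies $x_{ij} \geq 0$ and, by factoring the sums, gives $\sum_j x_{ij} = u_i \cdot \frac{1}{S}\sum_j v_j = u_i$ and similarly $\sum_i x_{ij} = v_j$. The only subtle point is the degenerate case $S = 0$, which forces $u_i = 0$ for all $i$ and $v_j = 0$ for all $j$ (because the entries of $u$ and $v$ are non-negative), in which case the zero matrix $x = 0$ lies in $P$.

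There is no real obstacle here: the argument is elementary once one separates the two directions and handles the degenerate $S=0$ case as above. An alternative constructive proof, worth mentioning for its relevance to the combinatorial point of view of the paper, is the \emph{northwest-corner rule}, which inductively places $x_{11} = \min(u_1, v_1)$ and reduces the problem to a smaller transportation polytope by decreasing the saturated margin to zero and subtracting the used mass from the other. This produces a vertex of $P$ with at most $p + q - 1$ nonzero entries, but for the mere non-emptiness statement the rank-one construction suffices.
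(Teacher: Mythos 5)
Your proof is correct and complete. The necessity direction (double counting the total mass) is the standard argument and is implicitly what any proof must do. For sufficiency, however, you take a genuinely different route from the paper: the paper simply invokes the northwest corner rule algorithm (citing Queyranne--Spieksma) to produce a feasible point, whereas your primary construction is the rank-one ``independence'' table $x_{ij}=u_iv_j/S$, with the degenerate case $S=0$ correctly handled by the zero matrix. Your construction is more self-contained and requires no induction, but note a trade-off relevant to this paper's setting: the rank-one table is generally not integral even when $u$ and $v$ are, while the northwest corner rule produces a vertex with at most $p+q-1$ nonzero entries and, crucially, yields an \emph{integer} point whenever the margins are integers --- a property that matters for the sets $L_N$ of lattice points studied later (e.g.\ in Theorem \ref{teo1}, whose constructive proof is essentially a refinement of the northwest corner idea). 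You already flag the northwest corner rule as an alternative, so nothing is missing; just be aware that for the integrality statements elsewhere in the paper the algorithmic construction is the one that generalizes, while for bare non-emptiness over $\mathbb{R}_{\geq 0}$ your rank-one table is the cleaner argument.
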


This proof uses the northwest corner rule algorithm (see \cite{Tp}).

The equations given in (\ref{polycon}) and the inequalities $x_{ij} \geq 0$ can be expressed in matrix form as follows
	\begin{equation}\label{MatrixR} P=\{x\in\mathbb{R}^{pq}:Ax=b,x\geq0\},\end{equation}
where $A$ is a matrix of size $(p+q)\times pq$ and $b\in\mathbb{R}^{p+q}$. The matrix $A$ is called the constraint matrix.

Transportation polytopes have a relationship with complete bipartite graph $K_{p,q}$ (\cite{Sal, Graph}) of two sets of vertices of $U$ and $V$ of cardinality $p$ and $q$, respectively, when we consider $U$ the supply and $V$ is the demand.

\begin{defi}
The graph $K_{p,q}$ is the complete bipartite graph consisting of two sets $U$ and $V$ of cardinality $p$ and $q$, respectively such that for any $i\in U$ and $j\in V$ there is an edge $e_{ij}$ connecting them. 
\end{defi}
It is well known that the constraint matrix for a $p\times q$ transportation polytope is the vertex-edge incidence matrix of the complete bipartite graph $K_{p,q}$.

\begin{exa}
Consider the $3\times 3$ transportation polytope $P$ defined by $u=(5,4,3)$ and $v=(6,2,4)$, then the complete bipartite graph $K_{3,3}$ is given by:\\
	
	\begin{figure}[h!]
		\centering
		\begin{tikzpicture}[y=.3cm, x=.3cm,font=\normalsize]
		
		\draw (0,0)  -- (10,0);
		\draw (0,0)  -- (10,5);
		\draw (0,0)  -- (10,10);
		\draw (0,5)  -- (10,0);
		\draw (0,5)  -- (10,5);
		\draw (0,5)  -- (10,10);
		\draw (0,10) -- (10,0);
		\draw (0,10) -- (10,5);
		\draw (0,10) -- (10,10);
		
		\filldraw[fill=blue!40,draw=black!80] (0,0) circle (3pt);
		\filldraw[fill=blue!40,draw=black!80] (0,5) circle (3pt);
		\filldraw[fill=blue!40,draw=black!80] (0,10) circle (3pt);
		\filldraw[fill=blue!40,draw=black!80] (10,0) circle (3pt);
		\filldraw[fill=blue!40,draw=black!80] (10,5) circle (3pt);
		\filldraw[fill=blue!40,draw=black!80] (10,10) circle (3pt);
		\end{tikzpicture}
	\end{figure}

 We also have that  $P=\{x\in\mathbb{R}^9:Ax=b,x \geq 0\}$, where the constraint matrix $A$ is given as follows
	$$A=\begin{bmatrix}
	1 & 0 & 0 & 1 & 0 & 0 & 1 & 0 & 0\\
	0 & 1 & 0 & 0 & 1 & 0 & 0 & 1 & 0\\
	0 & 0 & 1 & 0 & 0 & 1 & 0 & 0 & 1\\
	1 & 1 & 1 & 0 & 0 & 0 & 0 & 0 & 0\\
	0 & 0 & 0 & 1 & 1 & 1 & 0 & 0 & 0\\
	0 & 0 & 0 & 0 & 0 & 0 & 1 & 1 & 1
	\end{bmatrix}
	\textrm{ and\ \ \ } b=
	\begin{bmatrix}
	5\\
	4\\
	3\\
	6\\
	2\\
	4\\
	\end{bmatrix}.$$
	
In the Example \ref{ejpolyt}, the solution of $Ax=b$ can be expressed as $x^t=(4,1,1,0,2,0,1,1,2)$.

\end{exa}

\section{Multi-symmetric functions and transportation polytopes}\label{LS}

The product rule of elementary multi-symmetric functions given in Theorem \ref{teoimportante} involve a set of matrices with some remarkable properties. In this section we will provide some characterizations of the set $L(\alpha,\beta,n)$ in terms of transportation polytopes. In order to simplify our notation we will denote by  $L$ to the set $L(\alpha,\beta,n)$ (see Definition \ref{L}) and we can think of $\gamma\in L$ as natural points of transportation polytopes $P$.

In particular, the study of integer points of transportation polytopes is very popular in combinatorics, a lot of mathematical objects rich in combinatorial properties appear when we study integer points in polytopes such as magic squares \cite{Beck}, sudoku arrangements \cite{Stanley}, and others.

\begin{defi}\label{PN}
	Fix $p,q,N\in\mathbb{N}$ and let $u\in\mathbb{N}^{p+1}$,  $v\in\mathbb{N}^{q+1}$  be two vectors such that ${\displaystyle u_0=N-\sum_{i=1}^p u_i}$ and $\displaystyle{v_0=N-\sum_{i=1}^q v_i}$. The transportation polytope $P_N$ of size $p+1\times q+1$ defined by the vectors $u$ and $v$ is the convex polytope on $p+1\times q+1$ variables $x_{ij}\in\mathbb{R}_{\geq0}$, where $i\in\{0\}\cup[p]$ and $j\in\{0\}\cup[q]$, which satisfy the $p+q+2$ equations given by:
	\begin{equation}\label{polyconN}
	\sum_{j=0}^{q}x_{ij}=u_i\text{ and } \displaystyle\sum_{i=0}^{p}x_{ij}=v_j. 
	\end{equation}
\end{defi}

\begin{defi}\label{L}Fix $p,q,n\in\mathbb{N}$,  $\alpha\in\mathbb{N}^p$ and $\beta\in\mathbb{N}^q$. We denote by $L$ the set of matrices  $\gamma\in{\rm{Map}}(\{0\}\cup[p]\times\{0\}\cup[q], \mathbb{N})$ which satisfy the equations	\begin{itemize}
		\item $\gamma_{00}=0$.
		\item $|\gamma|=\displaystyle\sum_{i=0}^{p}\displaystyle\sum_{j=0}^{q}\gamma_{ij}\leq n.$
		\item $\displaystyle\sum_{j=1}^{q}\gamma_{ij}=\alpha_{i}$ for $i\in [p]$ 
		\item $\displaystyle\sum_{i=1}^{p}\gamma_{ij}=\beta_j$ for $j\in [q]$.
	\end{itemize}
\end{defi}

\begin{exa}
	For $\alpha=(2,1),\beta=(1,2)$ and $n=3$, the set $L$ is given by:
	$$L=\left\{\begin{bmatrix}
	0&0&0\\
	0&1&1\\
	0&0&1
	\end{bmatrix},
	\begin{bmatrix}
	0&0&0\\
	0&0&2\\
	0&1&0
	\end{bmatrix}\right\}.$$
\end{exa}


We denote by $L_N$ the subset of $L$ given by:
\begin{equation} L_N=\{\gamma\in L: |\gamma_{ij}|=N, \ \mbox{for some}\ N\leq n\} \end{equation}
The following result provides some combinatorial properties of $L_N$.

\begin{thm}\label{teo1}  The following identities holds 
\begin{enumerate}
\item{$L_N\neq \emptyset$ if $ \max\{|\alpha|,|\beta|\}\leq N \leq |\alpha|+|\beta|$.}
\item{ ${\displaystyle L=\bigsqcup_{N=\max\{|\alpha|,|\beta|\}}^{n}L_N}$, if $n< |\alpha|+|\beta|$.}  
\item{ ${\displaystyle L=\bigsqcup_{N=\max\{|\alpha|,|\beta|\}}^{|\alpha|+|\beta| }L_N}$, if $n\geq |\alpha|+|\beta|$.}  
\end{enumerate}
\end{thm}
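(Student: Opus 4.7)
The plan is to isolate a single scalar invariant of $\gamma$ that controls $|\gamma|$. I would introduce the interior sum $M(\gamma) := \sum_{i=1}^{p}\sum_{j=1}^{q}\gamma_{ij}$. Summing the row constraints over $i \in [p]$ gives $|\alpha| = \sum_{i=1}^{p}\gamma_{i0} + M(\gamma)$, and summing the column constraints over $j \in [q]$ gives $|\beta| = \sum_{j=1}^{q}\gamma_{0j} + M(\gamma)$. Using $\gamma_{00}=0$, the total sum decomposes as $|\gamma| = \sum_i \gamma_{i0} + \sum_j \gamma_{0j} + M(\gamma)$, and substituting yields the key identity
$$|\gamma| \;=\; |\alpha| + |\beta| - M(\gamma).$$
Since every entry of $\gamma$ lies in $\mathbb{N}$, one immediately gets $0 \leq M(\gamma) \leq \min(|\alpha|,|\beta|)$, so every $\gamma \in L$ satisfies $\max(|\alpha|,|\beta|) \leq |\gamma| \leq |\alpha|+|\beta|$.

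For part (1), given any $N \in [\max(|\alpha|,|\beta|),\,|\alpha|+|\beta|]$, I would reverse this computation. Set $M := |\alpha|+|\beta|-N$, so that $0 \leq M \leq \min(|\alpha|,|\beta|)$. Then pick coordinate-wise integer vectors $\tilde{\alpha} \leq \alpha$ and $\tilde{\beta} \leq \beta$ with $|\tilde{\alpha}| = |\tilde{\beta}| = M$ (fill the coordinates greedily up to $M$; this is possible because $M \leq \min(|\alpha|,|\beta|)$). By Lemma~\ref{lemma5}, applied with these integer margins, the northwest-corner rule produces an integer $p \times q$ transportation table $(\gamma_{ij})_{i\in[p],\,j\in[q]}$ with row sums $\tilde{\alpha}_i$ and column sums $\tilde{\beta}_j$. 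I would then complete $\gamma$ by setting $\gamma_{00}=0$, $\gamma_{i0} := \alpha_i - \tilde{\alpha}_i$, and $\gamma_{0j} := \beta_j - \tilde{\beta}_j$, all non-negative integers by construction. A direct check confirms that $\gamma$ lies in $L$ and that $|\gamma| = (|\alpha|-M)+(|\beta|-M)+M = N$.

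Parts (2) and (3) are then bookkeeping. The decomposition $L = \bigsqcup_{N} L_N$ by total sum is tautological; by part (1), together with the global bound $|\gamma| \leq n$ from the definition of $L$, the nonempty pieces are precisely those with $\max(|\alpha|,|\beta|) \leq N \leq \min(n,|\alpha|+|\beta|)$. Splitting on whether $n < |\alpha|+|\beta|$ or $n \geq |\alpha|+|\beta|$ produces the two stated formulas. The only delicate step is the integrality of the construction in part (1): one needs an integer point of the $p\times q$ transportation polytope with integer margins, which is exactly what the northwest-corner proof of Lemma~\ref{lemma5} supplies. If one preferred a self-contained argument, induction on $M$ using the elementary move $(\gamma_{i0},\gamma_{0j},\gamma_{ij}) \mapsto (\gamma_{i0}-1,\gamma_{0j}-1,\gamma_{ij}+1)$, which decreases $|\gamma|$ by one and is available whenever $M < \min(|\alpha|,|\beta|)$, would reach every $N$ in the range starting from the trivial configuration $\gamma_{i0}=\alpha_i$, $\gamma_{0j}=\beta_j$, $\gamma_{ij}=0$ for $i,j \geq 1$.
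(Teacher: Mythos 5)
Your proof is correct, and it is organized differently from the paper's. The paper proves part (1) by a direct sequential construction of the full $(p+1)\times(q+1)$ matrix: it first chooses the zeroth row and zeroth column as weak compositions of $\alpha_0=N-|\alpha|$ and $\beta_0=N-|\beta|$ bounded by the opposite margins, and then fills the interior row by row against the residual column capacities $\beta_j^{(k)}$, leaving the verification that this always terminates in an element of $L_N$ to the reader; parts (2) and (3) are dismissed as ``not difficult to check.'' You instead isolate the scalar identity $|\gamma|=|\alpha|+|\beta|-M(\gamma)$, which the paper never states, and this buys you two things the paper's argument only gestures at: it proves the \emph{necessity} of the range $\max\{|\alpha|,|\beta|\}\leq|\gamma|\leq|\alpha|+|\beta|$ for every $\gamma\in L$ (which is exactly what parts (2) and (3) require), and it reduces the existence question to the classical fact, already available as Lemma~\ref{lemma5} via the northwest-corner rule, that a $p\times q$ integer transportation table exists whenever the integer margins $\tilde\alpha,\tilde\beta$ have equal total sum. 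Your border-plus-interior decomposition is the same structural idea as the paper's, but your version carries its own correctness proof, whereas the paper's greedy filling still owes the reader an argument that a valid weak composition exists at each stage. One caveat applies to both proofs equally: as literally stated, part (1) also needs $N\leq n$ (since $L_N\subseteq L$ forces $|\gamma|\leq n$), a point you implicitly absorb into your discussion of parts (2) and (3) via the bound $N\leq\min(n,|\alpha|+|\beta|)$, and which the paper's statement and proof likewise gloss over.
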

\begin{proof}
Fix $N$ such that $ \max\{|\alpha|,|\beta|\}\leq N \leq |\alpha|+|\beta|$. We are going to construct an element $\gamma$ such that  $\gamma \in L_N$ as follows: Let $\gamma\in L$  such that $(\gamma_{01},\gamma_{02},\cdots,\gamma_{0q})$ be a $q$-weak composition of $\alpha_0$ which satisfy $\gamma_{0j}\leq \beta_j$ $\forall j\in[q]$, and let $(\gamma_{10},\gamma_{20},\cdots,\gamma_{p0})$ be a $p$-weak composition of $\beta_0$  which satisfy $\gamma_{i0}\leq\alpha_i$ $\forall i\in[p]$.

Denote by $\beta_{j}^{(k)}:=\beta_j-{\displaystyle \sum_{i=1}^{k-1}\gamma_{ij}}$, for $(k,j)\in[p]\times [q]$ and let $(\gamma_{11},\gamma_{12},\cdots,\gamma_{1q})$ be a $q$-weak composition of $\alpha_1-\gamma_{10}$ which satisfy $\gamma_{1j}\leq\beta_j^{(1)}$. 
Analogously we consider $(\gamma_{21},\gamma_{22},\cdots,\gamma_{2q})$ a $q$-weak composition of $\alpha_2-\gamma_{20}$ such that $\gamma_{2j}\leq\beta_j^{(2)}$.
Let's go through this process until we get $(\gamma_{p1},\gamma_{p2},\cdots,\gamma_{pq})$ a $q$-weak composition of $\alpha_p-\gamma_{p0}$ with $\gamma_{pj}\leq\beta_j^{(p)}$ and finally under this construction the reader can check that
  $\gamma_{ij}=\gamma\in L_N$, therefore $L_N\neq \emptyset$.

It is not difficult to check statements $2$ and $3$.
\end{proof}

\begin{exa}\label{ejem1}
The set $L$ defined by vectors $\alpha=(1,1)$, $\beta=(2,1)$ and $n=4$ is given by
	$$L=\left\{\begin{bmatrix}
	0&0&1\\
	0&1&0\\
	0&1&0
	\end{bmatrix},
	\begin{bmatrix}
	0&1&0\\
	0&0&1\\
	0&1&0
	\end{bmatrix},
	\begin{bmatrix}
	0&1&0\\
	0&1&0\\
	0&0&1
	\end{bmatrix},
	\begin{bmatrix}
	0&2&0\\
	0&0&1\\
	1&0&0
	\end{bmatrix},
	\begin{bmatrix}
	0&2&0\\
	1&0&0\\
	0&0&1
	\end{bmatrix},
	\begin{bmatrix}
	0&1&1\\
	1&0&0\\
	0&1&0
	\end{bmatrix},
	\begin{bmatrix}
	0&1&1\\
	0&1&0\\
	1&0&0
	\end{bmatrix}\right\}.$$
We have that  ${\displaystyle L=\bigsqcup_{N=3}^4L_N}$,  where $L_3$ and $L_{4}$ are given by
	$$L_3=\left\{\begin{bmatrix}
	0&0&1\\
	0&1&0\\
	0&1&0
	\end{bmatrix},
	\begin{bmatrix}
	0&1&0\\
	0&0&1\\
	0&1&0
	\end{bmatrix},
	\begin{bmatrix}
	0&1&0\\
	0&1&0\\
	0&0&1
	\end{bmatrix}\right\},$$

and

	$$\phantom{bla}L_4=\left\{
	\begin{bmatrix}
	0&2&0\\
	0&0&1\\
	1&0&0
	\end{bmatrix},
	\begin{bmatrix}
	0&2&0\\
	1&0&0\\
	0&0&1
	\end{bmatrix},
	\begin{bmatrix}
	0&1&1\\
	1&0&0\\
	0&1&0
	\end{bmatrix},
	\begin{bmatrix}
	0&1&1\\
	0&1&0\\
	1&0&0
	\end{bmatrix}\right\}.$$
	
\end{exa}
The following result shows that $L_N$ is a set of natural points in some transportation polytope.
\begin{thm}\label{Poly} There is a transportation polytope $P_M$ such that $L_N\subset P_M$.
\end{thm}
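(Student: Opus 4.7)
The plan is to construct, for given $\alpha,\beta,N$, a transportation polytope $P_M$ of size $(p+1)\times(q+1)$ in the sense of Definition \ref{PN} whose defining equations are satisfied by every $\gamma\in L_N$. First, since every $\gamma\in L_N$ has $|\gamma|=N$ while every point of $P_M$ has coordinate sum $M$, one is forced to take $M=N$. Double-counting the inner block of any $\gamma\in L$ by rows versus columns shows that $|\alpha|=|\beta|$ is necessary for $L_N\neq\emptyset$; combined with $|\gamma|=N$ this produces the uniform boundary balance
$$\sum_{i=1}^p\gamma_{i0}+\sum_{j=1}^q\gamma_{0j}=N-|\alpha|,$$
which is constant over $L_N$ and will be the key invariant used below.

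Next, I would extract marginals $u\in\mathbb{N}^{p+1}$ and $v\in\mathbb{N}^{q+1}$ from $\alpha,\beta,N$ by comparing the desired equations of Definition \ref{PN} with the full row and column sums of a generic $\gamma\in L_N$: for $i\in[p]$ the $i$-th row sum is $\gamma_{i0}+\alpha_i$ and the $0$-th row sum is $\sum_{j=1}^q\gamma_{0j}$, and dually for columns. The choice of $u_i,v_j$ must turn these expressions into identities valid for every $\gamma\in L_N$ simultaneously; the constraints $u_0=N-\sum_{i=1}^p u_i$ and $v_0=N-\sum_{j=1}^q v_j$ of Definition \ref{PN} force the compatibility $\sum u_i=\sum v_j=N$, which is already guaranteed by the invariant above. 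Containment $L_N\subset P_M$ is then verified by substituting the four bulleted conditions of Definition \ref{L} directly into $\sum_{j=0}^q\gamma_{ij}=u_i$ and $\sum_{i=0}^p\gamma_{ij}=v_j$; non-negativity and $\gamma_{00}=0$ are inherited from the definition of $L_N$.

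The main obstacle is that the individual boundary entries $\gamma_{i0}$ and $\gamma_{0j}$ vary across elements of $L_N$ (only their aggregate is fixed), whereas the marginal equations of Definition \ref{PN} impose strict equality on each row and column sum. Thus a naive row-by-row choice of margins such as $u_i=\alpha_i$ cannot accommodate all of $L_N$ at once, since it would force every boundary entry to vanish. The heart of the argument is therefore to choose $u,v$ that load the entire boundary variability into the single slack components $u_0$ and $v_0$ and then to exploit the invariant $\sum\gamma_{i0}+\sum\gamma_{0j}=N-|\alpha|$ to match these components uniformly over $L_N$; pinning down this matching, rather than the routine verification of the inner-block equations, is what I expect to be the delicate step.
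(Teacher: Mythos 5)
Your argument rests on reading the third and fourth bullets of Definition \ref{L} literally, as conditions on the inner block only ($\sum_{j=1}^{q}\gamma_{ij}=\alpha_i$ and $\sum_{i=1}^{p}\gamma_{ij}=\beta_j$). That reading is contradicted by the paper's own examples: in Example \ref{ejem1} one has $\alpha=(1,1)$, $\beta=(2,1)$, so $|\alpha|\neq|\beta|$, yet $L$ has seven elements --- under your reading your double-count would force $L=\emptyset$. The conditions actually in force (see the equations written out in Example \ref{ejemlucas2}, and Vaccarino's theorem itself) are the full sums $\sum_{j=0}^{q}\gamma_{ij}=\alpha_i$ and $\sum_{i=0}^{p}\gamma_{ij}=\beta_j$, i.e.\ the boundary entries $\gamma_{i0},\gamma_{0j}$ are included in the marginals. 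Once this is recognized, the ``naive'' choice you dismiss is exactly the right one and the theorem is immediate: take $M=N$ and margins $\overline{\alpha}=(N-|\alpha|,\alpha_1,\dots,\alpha_p)$, $\overline{\beta}=(N-|\beta|,\beta_1,\dots,\beta_q)$; every $\gamma\in L_N$ then satisfies the equations of Definition \ref{PN} verbatim, which is precisely the paper's proof.

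Under your reading, moreover, the plan you sketch cannot be completed. A transportation polytope fixes every row sum and every column sum, but if only the inner sums of $\gamma$ were constrained then the full $i$-th row sum $\gamma_{i0}+\alpha_i$ would genuinely vary over $L_N$ (already for $p=q=1$, $\alpha=\beta=(1)$, $N=2$ one gets two matrices with row-sum vectors $(1,1)$ and $(0,2)$), so no single $P_M$ could contain $L_N$ and the statement would be false. The boundary entry $\gamma_{i0}$ contributes to the $i$-th row equation, not to the $0$-th, so there is no way to ``load the boundary variability into $u_0$ and $v_0$''; the aggregate invariant $\sum\gamma_{i0}+\sum\gamma_{0j}=N-|\alpha|$ you isolate cannot substitute for the entrywise equalities a transportation polytope demands. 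The step you flag as delicate is in fact an impossibility, and the fix is to correct the reading of Definition \ref{L} rather than to search for cleverer margins.
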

\begin{proof} Let $\gamma \in L_N$, then $\gamma$ satisfy the equations given in Definition \ref{L}.  Under the assumptions of Definition \ref{PN}, consider the transportation polytope $P_M$  defined by margins $\overline{\alpha}\in\mathbb{N}^{p+1} $ and $\overline{\beta}\in\mathbb{N}^{q+1}$ such that 
\begin{itemize}
\item{$ \overline{\alpha}=(\alpha_0,\alpha_1,\cdots,\alpha_p)$ with $\alpha_0=M-{\displaystyle \sum_{i=1}^p \alpha_i}$. }
\item{$\overline{\beta}=(\beta_0,\beta_{1},\cdots,\beta_{q})$ with $\beta_0=M-{\displaystyle \sum_{i=1}^q \beta_i}$.}
\end{itemize}
It should be clear that $\gamma \in P_M$ if $M=N$.
\end{proof}

We make a few remarks regarding to Theorem \ref{Poly}. Elements $\gamma\in L_N$ are such that $|\gamma|=N$ and $\gamma\in L(\alpha, \beta,n)=L$, hence for $p,q,n\in \mathbb{N}^+$,   $\alpha\in\mathbb{N}^p$ and $\beta\in\mathbb{N}^q$,  $\gamma$ satisfy the conditions of Theorem \ref{teoimportante}. To find the transportation polytope $P_M$ such that $L_N\subset P_M$, we consider the transportation polytope defined by margins $ \overline{\alpha}$ and $\overline{\beta}$ which are obtained from $\alpha$ and $\beta$ adding new inputs $\alpha_0$, $\beta_{0}$  satisfying the condition given above. We stress that we will work with the transportation polytope $P_N$ which follows this previous construction. \\
This previous considerations imply our next result which establishes an example of the transportation polytopes associated with sets $L_3$ and $L_4$ given in Example \ref{ejem1}.

\begin{exa}\label{ejemploprinc}
Fix $p=q=2$,  $N=3$ and consider the vectors $\overline{\alpha}=(1,1,1)$, $\overline{\beta}=(0,2,1)$.  The transportation polytope 
$P_3$  defined by margins ${\overline{\alpha}}, {\overline{\beta}}$ is given by 	
	$$P_3=\left\{X\in M_{3\times 3}(\mathbb{R}_{\geq0}):\displaystyle \sum_{j=1}^3x_{ij}=\overline{\alpha}_i\text{ and } \sum_{i=1}^3x_{ij}=\overline{\beta}_j\right\},$$ 
and we have $L_3\subset P_3$. If we consider $X=\begin{bmatrix}
	0&\frac{1}{2}&\frac{1}{2}\\
	0&\frac{1}{2}&\frac{1}{2}\\
	0&1&0
	\end{bmatrix}$ 	
we have that $X\in P_3$ but $x\notin L_3$ and therefore $L_3\neq P_3$.\\

On the other hand,  fix $p=q=2$,  $N=4$ and consider the vectors  $\overline{\alpha}=(2,1,1)$, $\overline{\beta}=(1,2,1)$. The transportation polytope 
$P_4$  defined by margins ${\overline{\alpha}}, {\overline{\beta}}$ is given by 
	$$P_4=\left\{X\in M_{3\times 3}(\mathbb{R}_{\geq0}):\displaystyle \sum_{j=1}^3x_{ij}=\overline{\alpha}_i\text{ and } \sum_{i=1}^3x_{ij}=\overline{\beta}_j\right\},$$ 
and we have $L_4\subset P_4$. If we consider	 $X=\begin{bmatrix}
	\frac{1}{2}&1&\frac{1}{2}\\
	\frac{1}{2}&0&\frac{1}{2}\\
	0&1&0
	\end{bmatrix}$ 
we have that $X\in P_4$ but $X\notin L_4$ and therefore $L_4\neq P_4$.
\end{exa}

It is well known that transportation polytopes $P$ can be represented in matrix form, therefore transportation polytopes $P_N$ can be represented in matrix form as well (see Proposition \ref{teo2}).  In this case we consider the graph $K'_{p,q}$  obtained from $K_{p,q}$ removing the edge $e_{11}$. Figure \ref{k'33} shows the graph $K'_{3,3}$ associated to $K_{3,3}$.

\begin{figure}[h!]
	\centering
	\begin{tikzpicture}[y=.3cm, x=.3cm,font=\normalsize]
	
	\draw (0,0)  -- (10,0);
	\draw (0,0)  -- (10,5);
	\draw (0,0)  -- (10,10);
	\draw (0,5)  -- (10,0);
	\draw (0,5)  -- (10,5);
	\draw (0,5)  -- (10,10);
	\draw (0,10) -- (10,0);
	\draw (0,10) -- (10,5);
	
	\filldraw[fill=blue!40,draw=black!80] (0,0) circle (3pt);
	\filldraw[fill=blue!40,draw=black!80] (0,5) circle (3pt);
	\filldraw[fill=blue!40,draw=black!80] (0,10) circle (3pt);
	\filldraw[fill=blue!40,draw=black!80] (10,0) circle (3pt);
	\filldraw[fill=blue!40,draw=black!80] (10,5) circle (3pt);
	\filldraw[fill=blue!40,draw=black!80] (10,10) circle (3pt);
	\end{tikzpicture}
	\caption{$K^{\prime}_{3,3}$ graph.}\label{k'33}
\end{figure}


The following result provides the matrix form associated to $L_N$.
	\begin{prop}\label{teo2}
	For any $N\in\mathbb{N}$, each $L_N$ can be expressed as follows:
		$$L_N=\{x_N\in\mathbb{N}^{(p+1)(q+1)-1}: Ax_N=b_N\},$$
where $b_N=(\alpha_0,\alpha_1,\cdots,\alpha_p,\beta_0,\beta_{1},\cdots,\beta_{q})$ is such that $\alpha_0=N-{\displaystyle \sum_{i=1}^p \alpha_i}$,  $\beta_0=N-{\displaystyle \sum_{i=1}^q \beta_i}$, and $A$ is the matrix is obtain by following the next construction  
\begin{enumerate}
\item{Let $B$ be the constraint matrix of  $K^{\prime}_{p+1,q+1}$, and denote by $B^i$ the $i$-th column of $B$, for all $i$.}
\item{For $i\in[p]$ the $i$-th column $A^i$ of matrix $A$ is given by $A^i=B^{i(q+1)}$.}
\item{For $i\in[q]$ we have $A^{p+i}=B^{i}$.}
\item{Last columns of $A$ are obtained from $B$ after rearranging in ascended way the remaining columns. }
\end{enumerate}
\end{prop}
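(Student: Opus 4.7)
The plan is to translate the definition of $L_N$ step by step into the desired matrix form, treating the proposition as a bookkeeping statement rather than a structural theorem. The substantive content has already been done in Theorem \ref{Poly}; what remains is to make the column-by-column identification explicit.

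First, I would show that the constraints defining $L_N$ are equivalent to the full vertex-edge incidence system of the graph $K'_{p+1,q+1}$. Given $\gamma\in L_N$, Definition \ref{L} provides the row sums $\sum_{j=1}^{q}\gamma_{ij}=\alpha_i$ for $i\in[p]$ and column sums $\sum_{i=1}^{p}\gamma_{ij}=\beta_j$ for $j\in[q]$. Combining $|\gamma|=N$ with $\gamma_{00}=0$ and these equations yields the two missing equations
\[
\sum_{j=1}^{q}\gamma_{0j}=N-\sum_{i=1}^{p}\alpha_i=\alpha_0,\qquad \sum_{i=1}^{p}\gamma_{i0}=N-\sum_{j=1}^{q}\beta_j=\beta_0,
\]
so that $\gamma$ satisfies the full $(p+q+2)$ transportation equations with margins $\overline{\alpha}$, $\overline{\beta}$ of Definition \ref{PN}. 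Conversely, any nonnegative integer solution of these equations with $\gamma_{00}=0$ lies in $L_N$.

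Second, I would eliminate the forced variable $\gamma_{00}=0$. The remaining $(p+1)(q+1)-1$ entries of $\gamma$ can be viewed as edge weights on $K'_{p+1,q+1}$, where each of the $p+q+2$ equations above is exactly the incidence equation of a vertex of $K'_{p+1,q+1}$. Hence the system reads $Ax_N=b_N$, $x_N\geq 0$, where $A$ is (up to column order) the vertex-edge incidence matrix $B$ of $K'_{p+1,q+1}$, and $b_N=(\alpha_0,\alpha_1,\dots,\alpha_p,\beta_0,\beta_1,\dots,\beta_q)$. This already proves the set-theoretic content of the proposition; the integrality of $x_N$ follows from the definition of $L_N$ as $\mathbb{N}$-valued maps.

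Third, I would verify that the specific column assignment prescribed in items $1$-$4$ matches the variable ordering
\[
\vec{\gamma}=(\gamma_{10},\dots,\gamma_{p0},\gamma_{01},\dots,\gamma_{0q},\gamma_{11},\dots,\gamma_{1q},\dots,\gamma_{p1},\dots,\gamma_{pq})
\]
used throughout the paper. With the standard row-major enumeration of the edges of $K'_{p+1,q+1}$ (skipping $e_{11}$), one checks that column $B^{i(q+1)}$ is the incidence vector of the edge joining row-vertex $i+1$ to column-vertex $1$, i.e.\ the column corresponding to $\gamma_{i0}$; and column $B^{i}$ for $i\in[q]$ is the edge joining row-vertex $1$ to column-vertex $i+1$, i.e.\ the column corresponding to $\gamma_{0i}$. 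The columns of $B$ not chosen in steps $2$-$3$ are precisely the edges $e_{i+1,j+1}$ with $i\in[p]$, $j\in[q]$, and writing them in ascending index order reproduces the remaining block $(\gamma_{11},\dots,\gamma_{pq})$ of $\vec{\gamma}$.

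The only delicate point is the index arithmetic in step three: one must confirm that the formulas $A^i=B^{i(q+1)}$ and $A^{p+i}=B^{i}$ pick out exactly the border edges of the grid (the $\gamma_{i0}$'s and $\gamma_{0j}$'s) under the chosen enumeration. This is the main obstacle in the sense that it is entirely a careful accounting of how the removal of $e_{11}$ shifts subsequent column labels, but no conceptual difficulty is involved.
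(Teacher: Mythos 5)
Your proposal is correct and follows essentially the same route as the paper: identify the constraints of $L_N$ (after deriving the two extra margin equations from $|\gamma|=N$ and $\gamma_{00}=0$) with the vertex-edge incidence system of $K'_{p+1,q+1}$, drop the forced variable $\gamma_{00}$, and check that the prescribed column permutation of $B$ matches the ordering of $\vec{\gamma}$. In fact you supply more detail than the paper, whose proof asserts the rearrangement works without carrying out the index arithmetic that you verify explicitly.
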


\begin{proof}
Let $P_N$ be the transportation polytope such that $L_N\subset P_N$.
It should be clear that $P_N$ is an special case of $2$-way transportation polytope for any $N\in \mathbb{N}$. Observe that for $\gamma\in P_N$ we have $\gamma_{00}=0$, then $P_N$ can be expressed in matrix form as follows (see equation (\ref{MatrixR}))
$$P_N=\{x_N\in\mathbb{R}_{\geq0}^{(p+1)(q+1)-1}:Bx_N=b_N\},$$
where $B$ is the constraint matrix of the graph $K^{\prime}_{{p+1,q+1}}$. 

The matrix $A$ obtained from $B$ following the previous construction provides a rearrangement of $x_N$ such that solutions of the equation $Ax_N=b_N$ are vectors 
$\vec{\gamma}$ which satisfy the conditions of Theorem \ref{teoimportante}, therefore we have the desired result.

\end{proof}

\begin{exa}
For $N=3$ and $b_N=(1,1,1,0,2,1)$, we have that
	$$L_3=\{x_3\in\mathbb{N}^8:Ax_N=b_3\},$$
where $A$ is given as follows :\\

Let $B$ be the constraint matrix of  $K^{\prime}_{3,3}$ given by:

	$$B=\left[\begin{array}{cccccccc}
	1&1&0&0&0&0&0&0\\
	0&0&1&1&1&0&0&0\\
	0&0&0&0&0&1&1&1\\
	0&0&1&0&0&1&0&0\\
	1&0&0&1&0&0&1&0\\
	0&1&0&0&1&0&0&1
	\end{array}\right].$$
	
Under the assumptions of Proposition \ref{teo2}, for $p=q=2$, we have that

\begin{itemize}
	\item $A^1=B^3$ and $A^2=B^6$,
	\item $A^3=B^1$ and $A^4=B^2$,
	\item The last four columns of $A$ are given by $A^5=B^4,  A^6=B^5, A^7=B^7$ and $A^8=B^8$.
\end{itemize}
Then we have
	

$$\begin{array}{ccccr}
B=\begin{array}{cc}
\begin{array}{l @{\hspace{2mm}}l@{\hspace{2mm}}l@{\hspace{2mm}}l@{\hspace{2mm}}l@{\hspace{2mm}}l@{\hspace{2mm}}l@{\hspace{2mm}}l@{\hspace{2mm}}}
\scriptstyle{B^{1}}&\scriptstyle{B^{2}}&\scriptstyle{B^{3}}&\scriptstyle{B^{4}}&\scriptstyle{B^{5}}&\scriptstyle{B^{6}}&\scriptstyle{B^{7}}&\scriptstyle{B^{8}}\\
\end{array}\\
\begin{bmatrix}
1&1&0&0&0&0&0&0\\
0&0&1&1&1&0&0&0\\
0&0&0&0&0&1&1&1\\
0&0&1&0&0&1&0&0\\
1&0&0&1&0&0&1&0\\
0&1&0&0&1&0&0&1
\end{bmatrix}
\end{array} & \longrightarrow & A=
\end{array}
\begin{array}{cc}
\begin{array}{l @{\hspace{2mm}}l@{\hspace{2mm}}l@{\hspace{2mm}}l@{\hspace{2mm}}l@{\hspace{2mm}}l@{\hspace{2mm}}l@{\hspace{2mm}}l@{\hspace{2mm}}}
\scriptstyle{B^{3}}&\scriptstyle{B^{6}}&\scriptstyle{B^{1}}&\scriptstyle{B^{2}}&\scriptstyle{B^{4}}&\scriptstyle{B^{5}}&\scriptstyle{B^{7}}&\scriptstyle{B^{8}}\\
\end{array}\\
\begin{bmatrix}
0&0&1&1&0&0&0&0\\
1&0&0&0&1&1&0&0\\
0&1&0&0&0&0&1&1\\
1&1&0&0&0&0&0&0\\
0&0&1&0&1&0&1&0\\
0&0&0&1&0&1&0&1
\end{bmatrix}
\end{array}.$$
\end{exa}

Our next goal is to describe the structure of $\mathbb{N}$-matrix of the elements of $L_N$. To accomplish it, we will require some definitions due to R. Stanley  (see \cite{Stanley2}).
Let $A=(a_{ij})$ be  an $\mathbb{N}$-matrix with finitely many nonzero entries, that is $A$ is an $\mathbb{N}$-matrix of finite support and we can think of $A$ as either an infinity matrix or as an $m\times n$ matrix  when $a_{ij}=0$ for $i>m$ and $j>n$. Associate with $A$ a generalized permutation or two-line array $\omega_A$ given by
$$\omega_A=\left(\begin{matrix} i_1&i_2&i_3&\cdots&i_{m} \\ j_1&j_2&j_3&\cdots&j_{m} \end{matrix}\right)$$ such that 1. $i_1\leq i_2\leq\cdots\leq i_m$, 2. if $i_r=i_s$ and $r\leq s$ then $j_r\leq j_s$, and 3. for each pair $(i,j)$, there are exactly $a_{ij}$ values of $r$ for which $(i_r,j_r)=(i,j)$. $A$ determines a unique two-line array $\omega_{A}$ satisfying this conditions and conversely any such array corresponds to a unique $A$. For instance, if
 $A=\left (\begin{matrix} 0&0&1\\0&1&0\\0&1&0\end{matrix}\right)$, then the corresponding two-line array is
 $\omega_A=\left(\begin{matrix} 1&2&3 \\ 3&2&2 \end{matrix}\right).$

\begin{defi}
Fix $A$ an $\mathbb{N}$-matrix and let $\omega_{A}$ be the two-line array associate with $A$. We denote by ${\rm{type}}^1(\omega_{A})$  the vector $(u_1,\cdots,u_{m})$ such that 
the natural number $k$ appears exactly $u_k$ times in the first row of $\omega_A$ and we denote by ${\rm{type}}^2(\omega_{A})$  the vector $(v_1,\cdots,v_{m})$  such that the natural number $k$ appears exactly $v_k$ times in the second row of $\omega_A$.
\end{defi}
\begin{exa}
If
$\omega_A=\left(\begin{matrix} 1&2&3 \\ 3&2&2 \end{matrix}\right),$ then  ${\rm{type}}^1(\omega_A)=(1,1,1)$ and ${\rm{type}}^2(\omega_A)=(0,2,1).$
\end{exa}

Fix $N\in\mathbb{N}$, we denote by $\omega_N$ the set of two-line array given by
$$\omega_N=\left\{\omega_A=\left(\begin{matrix} i_1&i_2&i_3&\cdots&i_{N} \\ j_1&j_2&j_3&\cdots&j_{N} \end{matrix}\right): (i_1,j_1)\neq (1,1)\right\}$$

\begin{thm}\label{bijm}
There is a bijection between elements of $L_N$ and elements $\omega_A\in\omega_N$ such that  ${\rm{type}}^1(\omega_{A})=\overline{\alpha}$ and ${\rm{type}}^2(\omega_{A})=\overline{\beta}$.
\end{thm}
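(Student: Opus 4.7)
The plan is to derive the statement from the well-known bijection (the one recalled just before the theorem) between $\mathbb{N}$-matrices of finite support and two-line arrays satisfying conditions $1.$--$3.$, after a harmless re-indexing that puts the distinguished corner of the matrix at position $(1,1)$.

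First I would define, for each $\gamma\in L_N$, the shifted matrix $\gamma^{\prime}$ of size $(p+1)\times(q+1)$ by $\gamma^{\prime}_{ij}=\gamma_{i-1,\,j-1}$ for $(i,j)\in[p+1]\times[q+1]$. The conditions packaged in Definitions \ref{L} and \ref{PN}, together with $|\gamma|=N$, translate after the shift into the single statement that $\gamma^{\prime}$ has row-sum vector $\overline{\alpha}=(\alpha_0,\alpha_1,\ldots,\alpha_p)$ and column-sum vector $\overline{\beta}=(\beta_0,\beta_1,\ldots,\beta_q)$, where $\alpha_0=N-\sum_{i=1}^{p}\alpha_i$ and $\beta_0=N-\sum_{j=1}^{q}\beta_j$; indeed, the row-$0$ sum $\sum_{j}\gamma_{0j}$ equals $|\gamma|-\sum_{i=1}^{p}\alpha_i=N-\sum_{i=1}^{p}\alpha_i=\alpha_0$, and symmetrically for column $0$. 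I would then associate the canonical two-line array $\omega_{\gamma^{\prime}}$ to $\gamma^{\prime}$ and verify three points: its length is $\sum_{i,j}\gamma^{\prime}_{ij}=|\gamma|=N$; the multiplicity of $k$ in the top (respectively bottom) row of $\omega_{\gamma^{\prime}}$ is precisely the $k$-th row (respectively column) sum of $\gamma^{\prime}$, so $\mathrm{type}^1(\omega_{\gamma^{\prime}})=\overline{\alpha}$ and $\mathrm{type}^2(\omega_{\gamma^{\prime}})=\overline{\beta}$; and since the pairs $(i_r,j_r)$ are lexicographically ordered, $(1,1)$ occurs in $\omega_{\gamma^{\prime}}$ if and only if it is the first pair, so the condition $\gamma_{00}=0=\gamma^{\prime}_{11}$ is equivalent to $(i_1,j_1)\neq(1,1)$, i.e.\ $\omega_{\gamma^{\prime}}\in\omega_N$.

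For the inverse direction I would start from an arbitrary $\omega_A\in\omega_N$ with $\mathrm{type}^1(\omega_A)=\overline{\alpha}$ and $\mathrm{type}^2(\omega_A)=\overline{\beta}$, apply the inverse of the standard matrix/two-line-array correspondence to produce a matrix $A$ whose support lies in $[p+1]\times[q+1]$ (forced by the lengths of the type vectors) and whose $(1,1)$-entry vanishes (forced by $(i_1,j_1)\neq(1,1)$ together with the lex order), and then undo the shift to obtain $\gamma\in L_N$ with the prescribed row and column sums. The two constructions are mutually inverse because they are the restriction of mutually inverse classical maps defined on \emph{all} $\mathbb{N}$-matrices of finite support and \emph{all} two-line arrays. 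I do not foresee a genuine obstacle: the substantive content is the underlying matrix/array bijection, which is classical, and the only new work consists of the index shift plus matching the three bulleted conditions of Definition \ref{L} with the two type conditions and the excluded-pair condition defining $\omega_N$.
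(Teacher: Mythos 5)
Your proposal is correct and follows essentially the same route as the paper: both restrict the classical Stanley correspondence between $\mathbb{N}$-matrices of finite support and two-line arrays, matching the margin conditions with the type vectors $\overline{\alpha},\overline{\beta}$ and the condition $\gamma_{00}=0$ with $(i_1,j_1)\neq(1,1)$. Your write-up is in fact somewhat more careful than the paper's, since you make the index shift from $\{0\}\cup[p]\times\{0\}\cup[q]$ to $[p+1]\times[q+1]$ explicit and observe that the lexicographic ordering forces $(1,1)$, if it occurs at all, to be the first column of the array.
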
 

\begin{proof}
Let $\gamma\in L_N$. Using Stanley's construction, we can think of $\gamma\in L_N$ as an $(p+1)\times (q+1)$ matrix  when $\gamma_{ij}=0$ for $i>p+1$ and $j>q+1$, then $\gamma$ determines a unique two-line array $\omega_{\gamma}$ satisfying the previous conditions. It should be clear that there is a injective map $L_N\to \omega_N$. On the other hand, note that since the elements $\omega_A\in\omega_N$ are such that $(i_1,j_1)\neq (1,1)$, it follows that $a_{11}=0$, moreover  ${\rm{type}}^1(\omega_{A})=\overline{\alpha}$ and ${\rm{type}}^2(\omega_{A})=\overline{\beta}$, then $A=(a_{ij})$ is an element of $L_N$. Thus we conclude that there is a injective map $\omega_N\to L_N$.

\end{proof}

\begin{exa} Under the assumptions of Example \ref{ejem1} consider $L_4$  given by 

$$\phantom{bla}L_4=\left\{
	\begin{bmatrix}
	0&2&0\\
	0&0&1\\
	1&0&0
	\end{bmatrix},
	\begin{bmatrix}
	0&2&0\\
	1&0&0\\
	0&0&1
	\end{bmatrix},
	\begin{bmatrix}
	0&1&1\\
	1&0&0\\
	0&1&0
	\end{bmatrix},
	\begin{bmatrix}
	0&1&1\\
	0&1&0\\
	1&0&0
	\end{bmatrix}\right\}.$$
	
The set $\omega_4$ is given by 
$$\omega_4=\left \{  \left(\begin{matrix} 1&1& 2&3 \\ 2&2&3&1 \end{matrix}\right), \left(\begin{matrix} 1&1& 2&3 \\ 2&2&1&3 \end{matrix}\right), \left(\begin{matrix} 1&1& 2&3 \\ 2&3&1&2 \end{matrix}\right), \left(\begin{matrix} 1&1& 2&3 \\ 2&3&2&1 \end{matrix}\right)
 \right \}. $$
\end{exa}

It is well known that we can associated with an $\mathbb{N}$-matrix $A$ of finite support a pair $(P,Q)$ of semistandard Young tableau  (SSYT) of the same shape using the RSK algorithm. The RSK algorithm is a bijection between $\mathbb{N}$-matrices of finite support and ordered pairs $(P,Q)$ of SSYTs of the same shape.

On the other hand, we know that any $\gamma\in L_N$ is an $\mathbb{N}$-matrix of finite support such that  ${\rm{row}}(\gamma)=\overline{\alpha}$ and  ${\rm{col}}(\gamma)=\overline{\beta}$. Using Theorem \ref{bijm} we can see that RSK algorithm is a bijection between elements $\gamma\in L_N$ and ordered pairs $(P,Q)$ of SSYTs of the same shape such that 
${\rm{type}}(P)={\rm{col}}({\gamma})=\overline{\beta}$,  ${\rm{type}}(Q)={\rm{row}}(\gamma)=\overline{\alpha}$ and the first box of the last row of $P$ and $Q$ is not equal to $1$ simultaneously. Therefore, we can summarize it as follows

\begin{cor}
There is a bijection between $L_N$ and ordered pairs $(P,Q)$ of SSYTs of the same shape such that ${\rm{type}}(P)={\rm{col}}({\gamma})=\overline{\beta}$,  ${\rm{type}}(Q)={\rm{row}}(\gamma)=\overline{\alpha}$ and the first box of the last row of $P$ and $Q$ is not equal to  $1$ simultaneously.
\end{cor}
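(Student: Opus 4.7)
The strategy is to compose the bijection of Theorem~\ref{bijm} with the classical Robinson--Schensted--Knuth (RSK) correspondence. RSK is a bijection between $\mathbb{N}$-matrices of finite support and ordered pairs $(P,Q)$ of SSYTs of the same shape, sending the row-sum vector of the matrix to $\mathrm{type}(Q)$ and the column-sum vector to $\mathrm{type}(P)$. Viewing each $\gamma\in L_N$ as an $(p+1)\times(q+1)$ $\mathbb{N}$-matrix of finite support with $\mathrm{row}(\gamma)=\overline{\alpha}$ and $\mathrm{col}(\gamma)=\overline{\beta}$ (as in Definition~\ref{L}), RSK yields a pair $(P,Q)$ of SSYTs of equal shape with $\mathrm{type}(P)=\overline{\beta}$ and $\mathrm{type}(Q)=\overline{\alpha}$, automatically matching the type conditions stated in the corollary.

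Injectivity of the map $\gamma\mapsto(P,Q)$ is inherited from the injectivity of RSK, so the heart of the proof is to identify the image. For that, I would pass through Theorem~\ref{bijm}: the two-line array $\omega_\gamma$ attached to $\gamma\in L_N$ is characterised among all two-line arrays of types $\overline{\alpha}$ and $\overline{\beta}$ by the condition $(i_1,j_1)\neq(1,1)$, which is exactly the translation of the defining constraint $\gamma_{00}=0$. Composing with RSK reduces the problem to phrasing $(i_1,j_1)\neq(1,1)$ as an intrinsic condition on the pair $(P,Q)$.

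This translation is the main obstacle and will occupy the bulk of the argument. I would first observe that at the very first step of the RSK insertion, $j_1$ is placed into the cell $(1,1)$ of $P$ while $i_1$ is placed into the cell $(1,1)$ of $Q$; subsequent bumpings relocate the letter originally inserted, but the record left by that initial step is still witnessed by a single distinguished cell in each tableau. A careful tracking of bumpings along the first column should show that this distinguished cell is the one the statement refers to as ``the first box of the last row'' of $P$ and $Q$, so that requiring both of these entries to equal $1$ is equivalent to $(i_1,j_1)=(1,1)$. Finally, inverse RSK yields the reverse map: from any pair $(P,Q)$ of SSYTs of equal shape with $\mathrm{type}(P)=\overline{\beta}$, $\mathrm{type}(Q)=\overline{\alpha}$, and first boxes of the last row not both equal to $1$, one recovers a unique $\gamma$ with $\gamma_{00}=0$, $\mathrm{row}(\gamma)=\overline{\alpha}$, and $\mathrm{col}(\gamma)=\overline{\beta}$, so that $\gamma\in L_N$, and the bijection follows.
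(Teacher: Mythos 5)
Your overall route---compose the bijection of Theorem~\ref{bijm} with RSK, check that the row/column sums go to ${\rm type}(Q)$ and ${\rm type}(P)$, and then translate the constraint $(i_1,j_1)\neq(1,1)$ into a condition on the pair $(P,Q)$---is exactly the route the paper takes; its ``proof'' is the paragraph preceding the corollary, which likewise invokes Theorem~\ref{bijm} and RSK and then simply asserts the translation. So on the parts you carry out, you and the paper agree.

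The problem is the step you yourself identify as the main obstacle: the proposed mechanism for it does not work, and no amount of ``careful tracking of bumpings'' will rescue it. The first insertion does not leave a record in a single distinguished cell that detects $(i_1,j_1)=(1,1)$. What is true is that the corner cell of $Q$ equals $1$ exactly when $i_1=1$ (i.e.\ row $1$ of the matrix is nonzero), while the corner cell of $P$ equals $1$ exactly when the minimum of all inserted letters is $1$ (i.e.\ column $1$ of the matrix is nonzero); the conjunction of these two facts is strictly weaker than $a_{11}>0$. Concretely, take $\gamma=\left(\begin{smallmatrix}0&2&0\\0&0&1\\1&0&0\end{smallmatrix}\right)\in L_4$ from Example~\ref{ejem1}, whose two-line array is $\left(\begin{smallmatrix}1&1&2&3\\2&2&3&1\end{smallmatrix}\right)$. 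RSK produces $P$ with rows $(1,2,3)$ and $(2)$ and $Q$ with rows $(1,1,2)$ and $(3)$, so the box that plays the role of ``first box of the last row'' in the paper's own displayed example (the corner box, containing $P_{11}$ and $Q_{11}$) equals $1$ in \emph{both} tableaux, even though $\gamma_{00}=0$; the smaller matrix $\left(\begin{smallmatrix}0&1\\1&0\end{smallmatrix}\right)$ behaves the same way. Hence your forward map does not land in the claimed image, and your inverse map, applied to all pairs satisfying the stated condition, does not land in $L_N$. A correct argument needs a genuinely different tableau-theoretic criterion equivalent to $a_{11}=0$ (this also casts doubt on the corollary as literally stated, since the paper supplies no such criterion either); as written, your sketch leaves precisely this, the only nontrivial content of the statement, unproved.
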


\begin{exa} Let $\omega_\gamma=\left(\begin{matrix} 1&2&3 \\ 3&2&2 \end{matrix}\right)$ be the two-line array associated with 
 $\gamma=\left (\begin{matrix} 0&0&1\\0&1&0\\0&1&0\end{matrix}\right)$. The ordered pairs $(P,Q)$ of SSYTs are the following
 $$\left (\young(3,22), \young(2,13)\right)$$

\end{exa}

\subsection*{Acknowledgments} 
We thank the organizers of "Encuentro Colombiano de Combinatoria (ECCO)". We
are also grateful to Carolina Benedetti, Rafael Díaz, Rafael González and Felipe Rincón for their suggestions.  This work has been supported by Pontificia Universidad Javeriana.

\noindent epariguan@javeriana.edu.co\\
\noindent Departamento de Matem\'aticas. Pontificia Universidad Javeriana. Bogot\'a, Colombia\\

\noindent jhoan.sierra@utalca.cl\\
\noindent Instituto de Matemáticas y Física. Universidad de Talca. Talca. Chile.\\

\end{document}